\title{%
Reconstruction of complete interval tournaments. II.
}
\begin{document}
\setcounter{page}{47}
\maketitle

%% SINGLE AUTHOR. If you are a single author, please, use the following command and delete the \twoauthors command completely.
\oneauthor{%
 %% Write here the name of the author
Antal Iv\'anyi}{%
 %% Write here your affiliation including maybe your address. You can use \\ for line breaks.
E\"otv\"os Lor\'and University,

 Department of Computer Algebra

 1117 Budapest, P\'azm\'any P. s\'et\'any 1/C.}{%
 %% Write here your email.
tony@compalg.inf.elte.hu}

%% Short name of the authors and short title, to be included in heading.
\short{%
A. Iv\'anyi
}{%
Reconstruction of complete interval tournaments. II.
}

\begin{abstract}
Let $a, \ b \ (b \geq a)$ and $n \ (n \geq 2)$ be nonnegative integers and let $\mathcal{T}(a,b,n)$ be the set of such generalised
tournaments, in which every pair of distinct players is connected at most with $b$, and at least with $a$ arcs. In \cite{Ivanyi2009}
we gave a necessary and sufficient condition to decide whether a given sequence of nonnegative integers
$D = (d_1, d_2, \ldots, d_n)$ can be realized as the out-degree sequence of a $T \in \mathcal{T}(a,b,n)$. Extending the results
of \cite{Ivanyi2009} we show that for any sequence of nonnegative integers $D$ there exist $f$ and $g$ such that some element
$T \in \mathcal{T}(g,f,n)$ has $D$ as its out-degree sequence, and for any $(a,b,n)$-tournament $T'$ with the same out-degree
sequence $D$ hold $a\leq g$ and $b\geq f$. We propose a $\Theta(n)$ algorithm to determine $f$ and $g$ and an $O(d_n n^2)$
algorithm to construct a corresponding tournament $T$.
\end{abstract}

\section{Introduction}
Let $a, \ b \ (b \geq a)$ and $n \ (n \geq 2)$ be nonnegative integers and let $\mathcal{T}(a,b,n)$ be the set of such generalised
tournaments, in which every pair of distinct players is connected at most with $b$, and at least with $a$ arcs. The elements of
$\mathcal{T}(a,b,n)$ are called $(a,b,n)$\textit{-tournaments}. The vector $D = (d_1, d_2, \ldots, d_n)$ of the out-degrees of
$T \in \mathcal{T}(a,b,n)$ is called \textit{the score vector} of $T$. If the elements of $D$ are in nondecreasing order,
then $D$ is called \textit{the score sequence} of $T$.

An arbitrary vector $D = (d_1, d_2, \ldots, d_n)$ of nonnegative integers is called \textit{graphical vector,} iff there exists a
loopless multigraph whose degree vector is $D$, and $D$ is called \textit{digraphical vector} (or \textit{score vector}) iff there
exists a loopless directed multigraph whose out-degree vector is $D$.

A nondecreasingly ordered graphical vector is called \textit{graphical sequence}, and a nondecreasingly ordered digraphical
vector is called \textit{digraphical sequence} (or \textit{score sequence}).

The number of arcs of $T$ going from player P$_i$ to player P$_j$ is denoted by $m_{ij} \ (1 \leq i,j \leq n)$, and
the matrix $\mathcal{M} = [1. \ .n,1 . \ .n]$ is called \textit{point matrix} or \textit{tournament matrix} of $T$.

In the last sixty years many efforts were devoted to the study of both types of vectors, resp. sequences. E.g. in the papers
\cite{Berge1976,Erdos1960,Frank1976,Frank1980,Frank1996,Frank2003,Fulkerson1960,Griggs2004,Hakimi1962,Hakimi1974,Havel1955,
Katona1966,Patrinos1976,Senior1951,Sierksma1991,Szekely1992,
Tripathi2003,Wang2008} the graphical sequences, while in the papers \cite{Acosta2003,Avery1991,Bang1979,Belica2000,Berge1976,Brualdi2001,
Ford1962,Gervacio1988,Gervacio1993,Griggs1999,Guiduli1998,Hakimi1965,Hemasinha2003,Kleitman1976,Kleitman1973,Kleitman1981,
Landau1953,Mahmoodian1978,McKay1996,Moon1962,Moon1963,Moon1968,Narayana1964,Narayana1971,Ore1956,
Pecsy2000,Reid1996,Reid1998,Ruskey1994,vandenBrink2003,Stockmeyer1977,Stockmeyer2009,Wang1973,
Winston1983,Zhou2000} the score sequences were discussed.

Even in the last two years many authors investigated the conditions, when $D$ is graphical  (e.g.
\cite{Barrus2008,Boesch1976,Brualdi2009,Chung2008,Frank20081,Frank20082,Frank20083,Frank2009,Hu2009,Hulett2008,Jordon2009,Kim2009,
Klivans2008,Klivans2009,Meierling2009,Pirzada2009,Rodseth2009,Tripathi2008,Tripathi2008a,vandenBrink2009,Volkmann2009,Yin2008})
or digraphical (e.g. \cite{Beasley2009,
Harborth1982,Ivanyi2009,Kemnitz1997,Knuth2008,Lovasz2008,Nabyjev2008,Palvolgyi2009,Pirzada20082di,Pirzada2008imbalances,
Pirzada2008Inequalities,Pirzada2008oriented,Pirzada2008orientedhyper,Ryser1964,
Stockmeyer2009,Thomassen1981,Zhou2008}).

In this paper we deal only with directed graphs and usually follow the terminology used by K. B. Reid \cite{Reid1998,Reid2004}.
If in the given context $a, \ b$ and $n$ are fixed or non important, then we speak simply on \textit{tournaments} instead of
generalised or $(a,b,n)$-tournaments.

We consider the loopless directed multigraphs as generalised tournaments, in which the number of arcs from vertex/player
P$_i$ to vertex/player P$_j$ is denoted by $m_{ij}$, where $m_{ij}$ means the number of points won
by player P$_i$ in the match with player P$_j$.

The first question: how one can characterise the set of the score sequences of the $(a,b,n)$-tournaments. Or, with another words,
for which sequences $D$ of nonnegative integers does exist an $(a,b,n)$-tournament whose out-degree sequence is $D$.
The answer is given in Section \ref{section-exist}.

If $T$ is an $(a,b,n)$-tournament with point matrix $\mathcal{M}  = [1. \ .n,1 . \ .n]$, then let $E(T), \ F(T)$ and $G(T)$ be
defined as follows: $E(T) = \max_{1\leq i,j\leq n} m_{ij}$, $F(T) = \max_{1\leq i < j\leq n} (m_{ij} + m_{ji})$, and
$g(T) = \min_{1\leq i < j\leq n} (m_{ij} + m_{ji})$. Let $\Delta(D)$ denote the set of all tournaments having $D$ as
out-degree sequence, and let $e(D), \ f(D)$ and $g(D)$ be defined as follows:
$e(D) = \{\min \ E(T) \ | \ T \in \Delta(D)\}$, $f(D) = \{\min \ F(T) \ | \ T \in \Delta(D)\}$, and
$g(D) = \{\max \ G(T) \ | \ T \in \Delta(D)\}$. In the sequel we use the short notations $E, \ F, \ G, \ e, \ f, \ g,$
and $\Delta$.

Hulett et al. \cite{Hulett2008,Will2004}, Kapoor et al. \cite{Kapoor1977}, and Tripathi et al. \cite{Tripathi2006,Tripathi2008}
investigated the construction problem of a minimal size graph having a prescribed degree set \cite{Reid1978,Yao1989}.
In a similar way we follow a mini-max approach formulating the following questions: given a sequence $D$ of nonnegative integers,
\begin{itemize}
\item How to compute $e$ and how to construct a tournament $T \in \Delta$ characterised by $e$? In Section
\ref{section-e} a formula to compute $e$, and an algorithm to construct a corresponding tournament are presented.

\item How to compute $f$ and $g$? In Section \ref{section-fg} an algorithm to compute $f$ and $g$ is described.

\item How to construct a tournament $T \in \Delta$ characterised by $f$ and $g$? In Section \ref{section-construct}
an algorithm to construct a corresponding tournament is presented and analysed.
\end{itemize}

We describe the proposed algorithms in words, by examples and by the pseudocode used in \cite{Cormen2009}.

Researchers of these problems often mention different applications, e.g. in biology \cite{Landau1953}, chemistry Hakimi
\cite{Hakimi1962}, and Kim et al. in networks \cite{Kim2009}.

\section{Existence of a tournament with arbitrary degree sequence\label{section-exist}}

Since the numbers of points $m_{ij}$ are not limited, it is easy to construct a $(0,d_n,n)$-tournament for any $D$.

\begin{lemma} If $n \geq 2$, then for any\label{lemma-exist} vector of nonnegative integers $D = (d_1,$ $d_2, \ldots, d_n)$
there exists a loopless directed multigraph $T$ with out-degree vector $D$ so, that $E \leq d_n$.
\end{lemma}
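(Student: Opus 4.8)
The plan is to exhibit an explicit point matrix, taking advantage of the fact that the entries $m_{ij}$ carry no upper bound a priori. Since we only need $E\leq d_n$ and $d_n=\max_{1\leq i\leq n}d_i$, the simplest idea is to let every player concentrate all of its out-arcs on a single fixed opponent, so that each entry of $\mathcal{M}$ is either $0$ or one of the prescribed out-degrees $d_i$.

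Concretely, I would define $\mathcal{M}$ by setting $m_{i,i+1}=d_i$ for $1\leq i\leq n-1$, setting $m_{n,1}=d_n$, and putting $m_{ij}=0$ for every remaining pair $(i,j)$. Because $n\geq 2$, the target index assigned to each player differs from the player itself, so no diagonal entry is ever touched and the resulting loopless directed multigraph $T$ is well defined.

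The verification then splits into two routine checks. First, the out-degree of P$_i$ is $\sum_{j=1}^{n}m_{ij}$, which by construction equals the single nonzero term $d_i$; hence $T$ has out-degree vector exactly $D$. Second, every entry of $\mathcal{M}$ lies in $\{0,d_1,\ldots,d_n\}$, so $E=\max_{1\leq i,j\leq n}m_{ij}=\max_{1\leq i\leq n}d_i=d_n$, which gives $E\leq d_n$ as required.

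I do not expect a genuine obstacle here: the unlimited multiplicities make the construction immediate, and the only place that actually invokes the hypothesis $n\geq 2$ is the existence of a distinct target for each player, which is precisely what guarantees looplessness. The single modeling choice worth recording is that one could equally well route all arcs into one common sink vertex; the cyclic assignment above is preferred only because it treats the players symmetrically and keeps the bookkeeping trivial.
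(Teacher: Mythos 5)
Your construction is exactly the paper's proof: the paper likewise sets $m_{n1} = d_n$, $m_{i,i+1} = d_i$ for $i = 1, 2, \ldots, n-1$, and all remaining entries to zero, and your two verification steps merely spell out what the paper leaves implicit. The only point worth recording is that both arguments tacitly assume $d_n = \max_{1\leq i\leq n} d_i$ (otherwise $E = \max_i d_i$ could exceed $d_n$), which the lemma's word ``vector'' does not by itself guarantee but which matches the paper's later convention that $D$ is nondecreasing.
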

\begin{proof} Let $m_{n1} = d_n$ and  $m_{i,i+1} = d_i$ for $i = 1, 2, \ldots, n - 1$, and let the remaining $m_{ij}$ values
be equal to zero.
\end{proof}

Using weighted graphs it would be easy to extend the definition of the $(a,b,n)$-tournaments to allow \textit{arbitrary real values} of
$a$, $b$, and $D$. The following algorithm \textsc{Naive-Construct} works without changes also for input consisting of real numbers.

We remark that Ore in 1956 \cite{Ore1956} gave the necessary and sufficient conditions of the existence of a tournament
with prescribed in-degree and out-degree vectors. Further Ford and Fulkerson \cite[Theorem11.1]{Ford1962}
published in 1962 necessary and sufficient conditions of the existence of a tournament having prescribed lower and upper
bounds for the in-degree and out-degree of the vertices. They results also can serve as basis of the existence of a tournament
having arbitrary out-degree sequence.

\subsection{Definition of a naive reconstructing algorithm}

Sorting of the elements of $D$ is not necessary.

\textit{Input}. $n $: the number of players $(n \geq 2)$; \\
$D = (d_1, d_2, \ldots, d_n)$: arbitrary sequence of nonnegative integer numbers.

\textit{Output}. $\mathcal{M} = [1. \ .n,1. \ . n]$: the point matrix of the reconstructed tournament.

\textit{Working variables}. $i, \ j$: cycle variables.

\medskip
\noindent \textsc{Naive-Construct}$(n,D)$
\vspace{-2mm}
\begin{tabbing}%
199 \= xxx\=xxx\=xxx\=xxx\=xxx\=xxx\=xxx\=xxx \+ \kill
\hspace{-7mm}01 \textbf{for} \= $i \leftarrow 1$ \textbf{to} $n$ \\
\hspace{-7mm}02              \> \textbf{for} \= $j \leftarrow 1$ \textbf{to} $n$ \\
\hspace{-7mm}03              \>              \> \textbf{do} $m_{ij} \leftarrow 0$ \\
\hspace{-7mm}04 $m_{n1} \leftarrow d_n$ \\
\hspace{-7mm}05 \textbf{for} \= $i \leftarrow 1$ \textbf{to} $n - 1$ \\
\hspace{-7mm}06              \> \textbf{do} $m_{i,i+1} \leftarrow d_i$ \\
\hspace{-7mm}07 \textbf{return} $\mathcal{M}$
\end{tabbing}

The running time of this algorithm is $\Theta(n^2)$ in worst case (in best case too). Since the point
matrix $\mathcal{M}$ has $n^2$ elements, this algorithm is asymptotically optimal.

\section{Computation of $e$\label{section-e}}

This is also an easy question. From here we suppose that $D$ is a nondecreasing sequence of nonnegative integers, that is
$0 \leq d_1$ $\leq d_2 \leq \ldots \leq d_n$. Let $h = \lceil d_n/(n - 1) \rceil $.

Since $\Delta(D)$ is a finite set for any finite score vector $D$, $e(D) = \min\{E(T) | T \in \Delta(D)\}$ exists.

\medskip
\begin{lemma} If $n \geq 2$, then for\label{lemma-e} any sequence $D = (d_1, d_2,\ldots,d_n)$ there exists
a $(0,b,n)$-tournament $T$ such that
\begin{equation}
E \leq h \qquad \hbox{and} \quad b \leq 2h,\label{equation-e}
\end{equation}
and $h$ is the smallest upper bound for $e$, and $2h$ is the smallest possible upper bound for $b$.
\end{lemma}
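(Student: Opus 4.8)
The plan is to prove three things separately: the lower bound that forces $h$, the construction giving $E\le h$ and $b\le 2h$, and a tightness example showing $2h$ cannot be lowered.

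First I would establish that $e\ge h$ for every $D$, which shows $h$ cannot be replaced by anything smaller. The key observation is a pigeonhole argument applied to the strongest player: in any $T\in\Delta$ the player with out-degree $d_n$ distributes exactly $d_n$ points over the $n-1$ matches he plays, so $\sum_{j\ne n} m_{nj}=d_n$ forces $\max_{j\ne n} m_{nj}\ge \lceil d_n/(n-1)\rceil = h$. Hence $E(T)\ge h$, and therefore $e\ge h$. Combined with the construction below this yields $e=h$, so $h$ is indeed the smallest upper bound for $e$.

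Next I would construct an explicit $T$. Since the out-degrees are prescribed but the in-degrees are free, the rows of $\mathcal{M}$ may be filled independently. For each player $i$ I distribute the $d_i$ points as evenly as possible among the $n-1$ opponents: writing $d_i=q_i(n-1)+r_i$ with $0\le r_i<n-1$, I assign $q_i+1$ points to $r_i$ of the opponents and $q_i$ points to the rest, and set $m_{ii}=0$. This reproduces the out-degree $d_i$, and every entry satisfies $m_{ij}\le \lceil d_i/(n-1)\rceil \le \lceil d_n/(n-1)\rceil = h$ by monotonicity of the ceiling and $d_i\le d_n$. Consequently $E=\max_{i,j}m_{ij}\le h$, and for every pair $m_{ij}+m_{ji}\le h+h=2h$, so $T$ is a $(0,b,n)$-tournament with $b\le 2h$, which proves the existence claim.

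Finally I would show $2h$ is best possible by exhibiting a sequence for which every realization needs a pair of total $2h$. Assuming $d_n\ge 1$ (otherwise everything is zero and the statement is trivial), take the regular sequence $D=(d,d,\ldots,d)$ with $d=h(n-1)$, so that $h=\lceil d/(n-1)\rceil$. In any $T\in\Delta(D)$ the pair totals $m_{ij}+m_{ji}$ over all pairs sum to $\sum_i d_i = nh(n-1)$, whose average over the $n(n-1)/2$ pairs is exactly $2h$; hence some pair has total at least $2h$, giving $F(T)\ge 2h$ for every such $T$, i.e. $b\ge 2h$. Thus the bound $b\le 2h$ cannot be lowered. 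The main obstacle I anticipate is not the construction, which decouples across rows because only the out-degrees are constrained, but rather the verification that both bounds are simultaneously tight; the averaging identity on pair totals is the device that makes the $2h$ lower bound clean.
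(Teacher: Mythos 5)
Your proposal is correct and follows essentially the same route as the paper: a pigeonhole lower bound on the top player's row giving $e \geq h$, an as-uniform-as-possible distribution of each player's points giving $E \leq h$ and hence pair totals at most $2h$, and a regular score sequence witnessing that $2h$ cannot be lowered. The only difference is one of detail, not of method: where the paper simply cites the sequence $(2n(n-1),\ldots,2n(n-1))$ as showing $b \leq 2h$ is not improvable, you make the underlying averaging argument (pair totals sum to $nh(n-1)$, so some pair reaches the average $2h$) explicit.
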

\begin{proof}
If all players gather their points in a uniform as possible manner, that is
\begin{equation}
\max_{1 \leq j \leq n} m_{ij} - \min _{1 \leq j \leq n, \ i \neq j} m_{ij} \leq 1 \quad
\hbox{ for } i = 1, \ 2, \ \ldots, \ n, \label{equation-uniform}
\end{equation}
then we get $E \leq h$, that is the bound is valid. Since player P$_n$ has to gather $d_n$ points, the pigeonhole
principle \cite{Bege2006,Dossey2001,Jarai2009} implies $E \geq h$, that is the bound is not improvable. $E \leq h$ implies
$\max_{1 \leq i < j \leq n} m_{ij} + m_{ji} \leq 2h$. The score sequence $D = (d_1,d_2,\ldots,d_n) = (2n(n -1),2n(n - 1), \ldots,
2n(n - 1))$ shows, that the upper bound $b \leq 2h$ is not improvable.
\end{proof}
\begin{corollary} If $n \geq 2$, then for\label{corollary-e} any sequence $D = (d_1, d_2, \ldots ,d_n)$ holds
$e(D) = \lceil d_n/(n - 1) \rceil$.
\end{corollary}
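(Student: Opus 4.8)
The plan is to read off the corollary directly from Lemma~\ref{lemma-e}, which already contains both halves of the required equality. Recall that $e(D) = \min\{E(T) \mid T \in \Delta(D)\}$ is well defined because $\Delta(D)$ is nonempty (by Lemma~\ref{lemma-exist}) and finite. It therefore suffices to sandwich this minimum between $h$ and $h$, where $h = \lceil d_n/(n-1)\rceil$.

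For the upper bound $e(D) \leq h$, I would invoke the construction in the first part of Lemma~\ref{lemma-e}: distributing each player's points as uniformly as possible over the $n-1$ opponents, in the sense of the inequality~\eqref{equation-uniform}, yields a tournament $T$ with $E(T) \leq h$. Since $e(D)$ is by definition the minimum of $E$ over all of $\Delta(D)$, exhibiting this single witness already forces $e(D) \leq h$.

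For the matching lower bound $e(D) \geq h$, I would reuse the pigeonhole argument from the second part of Lemma~\ref{lemma-e}. Fix an arbitrary $T \in \Delta(D)$. Player $\mathrm{P}_n$ scores $d_n$ points spread over its $n-1$ matches, so at least one entry $m_{nj}$ must satisfy $m_{nj} \geq \lceil d_n/(n-1)\rceil = h$; hence $E(T) \geq h$. As this holds for every $T \in \Delta(D)$, taking the minimum gives $e(D) \geq h$.

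Combining the two inequalities yields $e(D) = h = \lceil d_n/(n-1)\rceil$, as claimed. I do not expect any genuine obstacle here: the corollary is essentially a repackaging of the two sharpness statements already proved in Lemma~\ref{lemma-e}, and the only point requiring a word of care is that the minimum defining $e(D)$ is actually attained, which is guaranteed by the finiteness of $\Delta(D)$ noted just before the lemma.
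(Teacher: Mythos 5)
Your proposal is correct and follows essentially the same route as the paper: the paper's own proof is a one-line appeal to Lemma~\ref{lemma-e}, whose two halves (the uniform-distribution construction giving $E \leq h$ and the pigeonhole bound giving $E \geq h$) are exactly the two inequalities you spell out. You merely unpack that lemma explicitly, including the (valid) remark that the minimum defining $e(D)$ is attained because $\Delta(D)$ is finite and nonempty.
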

\begin{proof} According to Lemma \ref{lemma-e} $h = \lceil d_n/(n - 1) \rceil$ is the smallest upper bound for $e$.
\end{proof}

\subsection{Definition of a construction algorithm}

The following algorithm constructs a $(0,2h,n)$-tournament $T$ having
$E \leq h$ \ for any $D$.

\textit{Input}. $n $: the number of players $(n \geq 2)$; \\
$D = (d_1, d_2, \ldots, d_n)$: arbitrary sequence of nonnegative integer numbers.

\textit{Output}. $\mathcal{M} = [1. \ .n,1. \ . n]$: the point matrix of the tournament.

\textit{Working variables}. $i, \ j, \ l$: cycle variables; \newline
$k$: the number of the ''larger parts" in the uniform distribution of the points.

\medskip
\noindent \textsc{Pigeonhole-Construct}$(n,D)$
\vspace{-2mm}
\begin{tabbing}%
199 \= xxx\=xxx\=xxx\=xxx\=xxx\=xxx\=xxx\=xxx \+ \kill
\hspace{-7mm}01 \textbf{for} \= $i \leftarrow 1$ \textbf{to} $n$ \\
\hspace{-7mm}02              \> \textbf{do} \= $m_{ii} \leftarrow 0$ \\
\hspace{-7mm}03              \>             \> $k \leftarrow d_i - (n - 1)\lfloor d_i/(n - 1) \rfloor$ \\
\hspace{-7mm}04              \> \textbf{for} \= $j \leftarrow 1$ \textbf{to} $k$ \\
\hspace{-7mm}05              \>              \> \textbf{do} \= $l \leftarrow i + j \ (\hbox{mod} \ n)$ \\
\hspace{-7mm}06              \>              \>             \> $m_{il} \leftarrow \lceil d_n/(n - 1) \rceil$ \\
\hspace{-7mm}07              \> \textbf{for} \= $j \leftarrow k + 1$ \textbf{to} $n - 1$ \\
\hspace{-7mm}08              \>              \> \textbf{do} \= $l \leftarrow i + j \ (\hbox{mod} \ n)$ \\
\hspace{-7mm}09              \>              \>             \> $m_{il} \leftarrow \lfloor d_n/(n - 1) \rfloor$ \\
\hspace{-7mm}10 \textbf{return} $\mathcal{M}$
\end{tabbing}

The running time of \textsc{Pigeonhole-Construct} is $\Theta(n^2)$ in worst case (in best case too). Since the point matrix
$\mathcal{M}$ has $n^2$ elements, this algorithm is asymptotically optimal.

\section{Computation of $f$ and $g$\label{section-fg}}

Let $S_i \ (i = 1, \ 2, \ \ldots, \ n)$ be the sum of the first $i$ elements of $D$, $B_i \ (i = 1, \ 2, \ \ldots, \ n)$
be the binomial coefficient $n(n - 1)/2$.
Then the players together can have $S_n$ points only if $fB_n \geq S_n$. Since the score of player P$_n$ is $d_n$,
the pigeonhole principle implies $f \geq \lceil d_n/(n - 1) \rceil$.

These observations result the following lower bound for $f$:
\begin{equation}
f \geq \max \left (\left \lceil \frac{S_n}{B_n} \right  \rceil, \left \lceil \frac{d_n}{n - 1}
             \right  \rceil \right).\label{equation-flowerbound}
\end{equation}

If every player gathers his points in a uniform as possible manner then
\begin{equation}
f \leq 2 \left \lceil \frac{d_n}{n - 1} \right \rceil.\label{equation-fupperbound}
\end{equation}

These observations imply a useful characterisation of $f$.

\begin{lemma} If $n \geq 2$, then for\label{lemma-fgbounds} arbitrary sequence $D = (d_1, d_2, \ldots, d_n)$ there exists
a $(g,f,n)$-tournament having $D$ as its out-degree sequence and the following bounds for $f$ and $g$:
\begin{equation}
\max \left ( \left \lceil \frac{S}{B_n} \right \rceil, \left \lceil \frac{d_n}{n - 1} \right
\rceil \right) \leq f \leq 2 \left \lceil \frac{d_n}{n - 1} \right \rceil,\label{equation-flemma}
\end{equation}
\begin{equation}
0 \leq g \leq f.\label{equation-glemma}
\end{equation}
\end{lemma}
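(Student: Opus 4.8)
The plan is to assemble the statement from three essentially independent arguments: one realization that simultaneously certifies existence and the upper bound on $f$, a counting-plus-pigeonhole argument for the lower bound on $f$, and an averaging argument for the two-sided bound on $g$. Throughout I write $S_n=\sum_{i=1}^n d_i$ for the total score (the quantity denoted $S$ in the statement).

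First, for existence and the upper bound $f\leq 2\lceil d_n/(n-1)\rceil$, I would invoke Lemma~\ref{lemma-e}: the $(0,b,n)$-tournament it guarantees (realized explicitly by \textsc{Pigeonhole-Construct}) distributes each player's points as uniformly as possible, so every entry satisfies $m_{ij}\leq h=\lceil d_n/(n-1)\rceil$. Consequently every pair-sum obeys $m_{ij}+m_{ji}\leq 2h$, giving $F\leq 2h$ and hence $f=\min_T F(T)\leq 2\lceil d_n/(n-1)\rceil$. This one tournament already shows $\Delta(D)\neq\emptyset$ and furnishes a $(g,f,n)$-tournament with the claimed $f$.

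Next, for the left-hand bound of (\ref{equation-flemma}) I would show that both quantities inside the maximum bound \emph{every} realization. Fix an arbitrary $T\in\Delta$. Summing all pair-sums gives $\sum_{i<j}(m_{ij}+m_{ji})=\sum_i d_i=S_n$, and since each of the $B_n$ pair-sums is at most $F(T)$ we obtain $F(T)\,B_n\geq S_n$, whence $F(T)\geq\lceil S_n/B_n\rceil$ because $F(T)$ is an integer. Independently, player $\mathrm{P}_n$ spreads its $d_n$ points over $n-1$ matches, so by the pigeonhole principle some $m_{nj}\geq\lceil d_n/(n-1)\rceil$, forcing the corresponding pair-sum, and hence $F(T)$, to be at least $\lceil d_n/(n-1)\rceil$. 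Both inequalities hold for every $T$, so they pass to $f=\min_T F(T)$; these are exactly observations (\ref{equation-flowerbound})--(\ref{equation-fupperbound}) now read uniformly over $\Delta$.

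Finally, for (\ref{equation-glemma}) the inequality $0\leq g$ is immediate, since $m_{ij}\geq0$ makes every pair-sum, and thus $G(T)$, nonnegative for each $T$, so $g=\max_T G(T)\geq0$. The step I expect to require the most care is $g\leq f$, because $g=\max_T G(T)$ and $f=\min_T F(T)$ are extremal over \emph{different} tournaments, so the pointwise inequality $G(T)\leq F(T)$ does not by itself close the gap. The clean device is averaging: for every $T$ the minimum pair-sum is at most the mean pair-sum and the maximum is at least the mean, i.e. $G(T)\leq S_n/B_n\leq F(T)$. Taking the maximum over $T$ on the left gives $g\leq S_n/B_n$, taking the minimum over $T$ on the right gives $f\geq S_n/B_n$, and chaining these through the common value $S_n/B_n$ yields $g\leq f$, completing the bounds.
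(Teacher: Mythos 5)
Your proof is correct and follows essentially the same route as the paper: your lower bound for $f$ is exactly the paper's observation (\ref{equation-flowerbound}) (total-points counting giving $F(T)B_n\geq S_n$, plus pigeonhole on player P$_n$), and your upper bound is (\ref{equation-fupperbound}), realized by the uniform distribution of Lemma \ref{lemma-e} / \textsc{Pigeonhole-Construct}. The one place you go beyond the paper is the inequality $g\leq f$: the paper's proof dismisses (\ref{equation-glemma}) with ``follows from the definition of $f$,'' whereas you correctly observe that $g$ and $f$ are extremized over \emph{different} tournaments, so the pointwise inequality $G(T)\leq F(T)$ is not enough, and you close the gap by chaining $G(T)\leq S_n/B_n\leq F(T')$ through the mean pair-sum $S_n/B_n$, which is the same constant for every realization of $D$. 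This averaging step is a genuine (and needed) sharpening of the paper's one-line justification, not a departure from its method.
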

\begin{proof}
(\ref{equation-flemma}) follows from (\ref{equation-flowerbound}) and (\ref{equation-fupperbound}), (\ref{equation-glemma})
follows from the definition of $f$.
\end{proof}

It is worth to remark, that if $d_n/(n - 1)$ is integer and the scores are identical, then the lower and upper bounds in
(\ref{equation-flemma}) coincide and so Lemma \ref{lemma-fgbounds} gives the exact value of $F$.

In connection with this lemma we consider three examples.
If $d_i = d_n = 2c(n - 1) \ (c > 0, \ i = 1, \ 2, \ \ldots, \ n - 1)$, then
$d_n/(n - 1) = 2c$ and $S_n/B_n = c$, that is $S_n/B_n$ is twice larger than $d_n/(n - 1)$. In the other extremal case, when
$d_i = 0 \ (i = 1, \ 2, \ \ldots, n - 1)$ and $d_n = cn(n - 1) > 0$, then $d_n/(n - 1) = cn$, $S_n/B_n = 2c$, so $d_n/(n - 1)$ is
$n/2$ times larger, than $S_n/B_n$.

If $D = (0, 0, 0, 40, 40,40)$, then Lemma \ref{lemma-fgbounds} gives the bounds $8 \leq f \leq 16$.
Elementary calculations show that Figure \ref{figure-1} contains the solution with minimal $f$, where $f = 10$.

\begin{figure}[!h]
\begin{center}
\begin{tabular}{|c|c|c|c|c|c|c|c|}  \hline
Player/Player &  P$_1$ & P$_2$ & P$_3$ & P$_4 $ & P$_5$ & P$_5$ &  Score \\ \hline
P$_1$         & ---    &  $0$  & $0$   &    $0$ &  $0$  & $0$   &   $0$   \\ \hline
P$_2$         & $0$    &  ---  & $0$   & $0$    & $0$   & $0$   &   $0$  \\ \hline
P$_3$         & $0$    & $0$   &  ---  &  $0$   &  $0$  & $0$   &   $0$  \\ \hline
P$_4$         &$10$   & $10$ & $10$ &  ---   & $5$  & $5$  & $40$\\ \hline
P$_5$         &$10$   & $10$ & $10$ & $5$   &  ---  & $5$  & $40$ \\ \hline
P$_6$         &$10$   & $10$ & $10$ & $5$   &  $5$ &  ---  & $40$ \\ \hline
\end{tabular} \\
\caption{Point matrix of a $(0,10,6)$-tournament with $f = 10$ for $D = (0,0,0,40,40,40)$.\label{figure-1}}
\end{center}
\vspace{-2mm}
\end{figure}

In \cite{Ivanyi2009} we proved the following assertion.

\begin{theorem} For $n \geq 2$ a\label{theorem-interval} nondecreasing sequence $D = (d_1, d_2, \ldots, d_n)$ of nonnegative
integers is the score sequence of some $(a,b,n)$-tournament if and only if
\begin{equation}
aB_k \leq \sum_{i = 1}^k d_i \leq bB_n - L_k - (n - k)d_k \quad (1 \leq k \leq n),\label{equation-interval}
\end{equation}
where
\begin{equation}
L_0 = 0, \hbox{ and } L_k = \max \left( L_{k - 1}, \ bB_k - \sum_{i = 1}^{k} d_i  \right ) \quad (1 \leq k \leq n).\label{equation-loss}
\end{equation}
\end{theorem}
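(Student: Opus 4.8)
The plan is to regard a realization as a nonnegative integer matrix $\mathcal{M}=(m_{ij})$ with $m_{ii}=0$, prescribed row sums $\sum_{j}m_{ij}=d_i$, and per-pair totals $m_{ij}+m_{ji}\in[a,b]$ for $i<j$. Writing $S_k=\sum_{i=1}^k d_i$ and $B_k=k(k-1)/2$, I would prove the two inequalities in (\ref{equation-interval}) separately: necessity by direct counting, sufficiency by a feasible-flow argument.

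For necessity the lower bound is immediate, since the bottom $k$ players play $B_k$ mutual matches, each distributing at least $a$ points among these same $k$ players, whence $S_k\ge aB_k$. For the upper bound, fix the index $j^\ast$ with $0\le j^\ast\le k$ attaining $L_k=\max_{0\le j\le k}(bB_j-S_j)$, so that $L_k=bB_{j^\ast}-S_{j^\ast}\ge 0$. Two observations then suffice. First, by monotonicity of $D$ one has $(n-k)d_k\le\sum_{i>k}d_i$, so $\sum_{i=j^\ast+1}^{k}d_i+(n-k)d_k\le\sum_{i>j^\ast}d_i$. Second, every point won by a player in $\{j^\ast+1,\dots,n\}$ is played in a pair meeting this block, and there are exactly $B_n-B_{j^\ast}$ such pairs, each carrying at most $b$ points, so $\sum_{i>j^\ast}d_i\le b(B_n-B_{j^\ast})$. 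Chaining these and substituting $L_k=bB_{j^\ast}-S_{j^\ast}$ rearranges exactly into $S_k\le bB_n-L_k-(n-k)d_k$.

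For sufficiency I would build a transportation network and invoke the Ford--Fulkerson feasibility theorem \cite{Ford1962}. Introduce a source $s$, a sink $t$, a node for each unordered pair $\{i,j\}$ joined to $s$ by an arc with lower capacity $a$ and upper capacity $b$, two further arcs from that pair-node into the player-nodes $i$ and $j$, and an arc from each player-node $i$ to $t$ with capacity fixed at $d_i$. An integral feasible flow sends the pair total along $s\to\{i,j\}$, the values $m_{ij},m_{ji}$ along the pair-to-player arcs, and forces player $i$ to win exactly $d_i$ points; thus feasible flows correspond bijectively to the desired tournaments. The interval $[a,b]$ on the pair arcs is precisely what generates both a lower and an upper family of cut constraints.

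The main obstacle, and the heart of the matter, is to reduce the exponentially many cut conditions to the $n$ prefix inequalities (\ref{equation-interval}). Using that $D$ is sorted, I would show by an exchange argument that a most-violated cut may always be taken to consist of a bottom block $\{1,\dots,k\}$ together with the pair-nodes it saturates, so that only prefixes need be tested; the cumulative effect of the upper caps $b$ over the earlier prefixes is exactly what the running maximum $L_k=\max(L_{k-1},\,bB_k-S_k)$ records, namely the largest unavoidable shortfall the top players must still absorb. Verifying that this running maximum equals the relevant min-cut value, so the feasibility condition collapses to (\ref{equation-interval}), is the delicate step; once it holds, Lemma \ref{lemma-fgbounds} confirms the admissible range is nonempty and the flow yields the required tournament. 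A constructive alternative is a Havel--Hakimi-type induction that removes player $n$ after spreading its $d_n$ wins as evenly as the caps permit and checks that the reduced sequence again satisfies (\ref{equation-interval}); there too the only real work is propagating the $L_k$ bookkeeping through the reduction.
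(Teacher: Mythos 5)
Your proposal splits into two halves of very different status. The necessity half is complete and correct, and is a nice argument: unrolling the recursion (\ref{equation-loss}) to $L_k=\max_{0\le j\le k}\left(bB_j-S_j\right)$, picking the maximizing index $j^\ast$, using monotonicity to get $(n-k)d_k\le\sum_{i>k}d_i$, and counting that all points won by players in $\{j^\ast+1,\ldots,n\}$ come from the $B_n-B_{j^\ast}$ pairs meeting that block, each worth at most $b$, does chain exactly into $S_k\le bB_n-L_k-(n-k)d_k$; the lower bound $aB_k\le S_k$ is immediate as you say. Note for orientation that the paper you are being compared against does not actually prove this theorem at all: it imports it from \cite{Ivanyi2009} (``In \cite{Ivanyi2009} we proved the following assertion''), where the sufficiency direction is established constructively, by the score-slicing algorithm that the present paper extends as \textsc{Score-Slicing2}.

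The sufficiency half of your proposal, however, contains a genuine gap, and you flag it yourself. The network model is correct (pair-nodes with bounds $[a,b]$ toward the source, player-nodes with exact demand $d_i$ toward the sink, integral feasible flows $=$ tournaments), but the entire mathematical content of the theorem lies in the step you defer: showing that the exponentially many Hoffman/Ford--Fulkerson cut conditions collapse to the $n$ prefix inequalities (\ref{equation-interval}), with the running maximum $L_k$ of (\ref{equation-loss}) emerging as the min-cut correction term. Everything before that step is routine modeling, and everything after it (``once it holds, \ldots the flow yields the required tournament'') is conditional on it. The exchange argument you gesture at must handle cuts mixing pair-nodes and player-nodes, and the term $L_k$ --- which is precisely what distinguishes this theorem from the Landau/Moon special cases $a=b$ --- encodes an interaction between \emph{different} prefixes that a bare ``a worst cut is a bottom block'' claim does not deliver; it is exactly this bookkeeping that has to be verified, not assumed. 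Your alternative route (a Havel--Hakimi-style reduction removing player $n$) defers the same work in the same place. Finally, the appeal to Lemma \ref{lemma-fgbounds} is out of place: that lemma concerns existence of \emph{some} admissible pair $(g,f)$ for a given $D$ and says nothing about feasibility of the particular pair $(a,b)$ under test. So: necessity proved; sufficiency reduced to a plausible but unexecuted plan, which is where the theorem actually lives.
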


The theorem proved by Moon \cite{Moon1963}, and later by  Kemnitz and Dolff \cite{Kemnitz1997} for $(a,a,n)$-tournaments is the special
case $a = b$ of Theorem \ref{theorem-interval}. Theorem 3.1.4 of \cite{Frank20081} is the special case $a = b = 2$. The theorem of
Landau \cite{Landau1953} is the special case $a = b = 1$ of Theorem \ref{theorem-interval}.

\subsection{Definition of a testing algorithm}

The following algorithm \textsc{Interval-Test}  decides whether a given $D$ is a
score sequence of an $(a,b,n)$-tournament or not. This algorithm  is based on Theorem
\ref{theorem-interval} and returns $W = \textsc{True}$ if $D$ is a score sequence, and
returns $W = \textsc{False}$ otherwise.

\medskip
\textit{Input}. $a$: minimal number of points divided after each match; \\
$b$: maximal number of points divided after each match.

\textit{Output}. $W$: logical variable $(W = \textsc{True}$ shows that $D$ is an
$(a,b,n)$-tournament.

\textit{Local working variables}. $i$: cycle variable; \\
$L = (L_0,L_1,\ldots,L_n)$: the sequence of the values of the loss function.

\textit{Global working variables}. $n$: the number of players $(n \geq 2)$; \\
$D = (d_1, d_2, \ldots, d_n)$: a nondecreasing sequence of nonnegative integers; \\
$B = (B_0,B_1,\ldots,B_n)$: the sequence of the binomial coefficients; \\
$S = (S_0,S_1,\ldots,S_n)$: the sequence of the sums of the $i$ smallest scores.

\medskip
\noindent \textsc{Interval-Test}$(a,b)$
\vspace{-2mm}
\begin{tabbing}%
199 \= xxx\=xxx\=xxx\=xxx\=xxx\=xxx\=xxx\=xxx \+ \kill
\hspace{-7mm}01 \textbf{for} \= $i \leftarrow 1$ \textbf{to} $n$ \\
\hspace{-7mm}02              \> \textbf{do} \= $L_i \leftarrow \max(L_{i - 1}, \ bB_n - S_i - (n - i)d_i)$ \\
\hspace{-7mm}03              \>             \> \textbf{if} \= $S_i < aB_i$ \\
\hspace{-7mm}04              \>             \>             \> \textbf{then} \= $W \leftarrow \textsc{False}$ \\
\hspace{-7mm}05              \>             \>             \>               \> \textbf{return} $W$ \\
\hspace{-7mm}06              \>             \> \textbf{if} \= $S_i > bB_n - L_i - (n - i)d_i$ \\
\hspace{-7mm}07              \>             \>             \> \textbf{then} \= $W \leftarrow \textsc{False}$ \\
\hspace{-7mm}08              \>             \>             \>               \> \textbf{return} $W$ \\
\hspace{-7mm}09 \textbf{return} $W$
\end{tabbing}

In worst case \textsc{Interval-Test} runs in $\Theta(n)$ time even in the general case $0 < a < b$ (n the best case
the running time of \textsc{Interval-Test} is $\Theta(n)$). It is worth to mention, that the often referenced
Havel--Hakimi algorithm \cite{Hakimi1962,Havel1955} even in the special case $a = b = 1$ decides in $\Theta({n^2})$ time
whether a sequence $D$ is digraphical or not.

\subsection{Definition of an algorithm computing $f$ and $g$}

The following algorithm is based on the bounds of $f$ and $g$ given by Lemma \ref{lemma-fgbounds} and the logarithmic search
algorithm described by D. E. Knuth \cite[page 410]{Knuth1998}.

\medskip
\textit{Input}. No special input (global working variables serve as input).

\textit{Output}. $b$: $f$ (the minimal $F$); \\
$a$: $g$ (the maximal $G$).

\textit{Local working variables}. $i$: cycle variable; \\
$l$: lower bound of the interval of the possible values of $F$; \\
$u$: upper bound of the interval of the possible values of $F$.

\textit{Global working variables}. $n$: the number of players $(n \geq 2)$; \\
$D = (d_1, d_2, \ldots, d_n)$: a nondecreasing sequence of nonnegative integers; \\
$B = (B_0,B_1,\ldots,B_n)$: the sequence of the binomial coefficients; \\
$S = (S_0,S_1,\ldots,S_n)$: the sequence of the sums of the $i$ smallest scores; \\
$W$: logical variable (its value is \textsc{True}, when the investigated $D$ is a score sequence).

\medskip

\noindent \textsc{MinF-MaxG}
\vspace{-2mm}
\begin{tabbing}%
199 \= xxx\=xxx\=xxx\=xxx\=xxx\=xxx\=xxx\=xxx \+ \kill
\hspace{-7mm}01 $B_0 \leftarrow S_0 \leftarrow L_0 \leftarrow 0$ \hspace{2.7cm} $\rhd$ Initialisation \\
\hspace{-7mm}02 \textbf{for} \= $i \leftarrow 1$ \textbf{to} $n$ \\
\hspace{-7mm}03              \> \textbf{do} \= $B_i \leftarrow B_{i - 1} + i - 1$ \\
\hspace{-7mm}04              \>             \> $S_i \leftarrow S_{i - 1} + d_i$  \\
\hspace{-7mm}05 $l \leftarrow \max (\lceil S_n/B_n \rceil,\lceil d_n/(n - 1) \rceil$) \\
\hspace{-7mm}06 $u \leftarrow 2 \left \lceil d_n/(n - 1) \right \rceil$ \\
\hspace{-7mm}07 $W \leftarrow \textsc{True}$                      \hspace{3.9cm} $\rhd$ Computation of $f$ \\
\hspace{-7mm}08 \textsc{Interval-Test}$(0,l)$ \\
\hspace{-7mm}09 \textbf{if} \= $W = \textsc{True}$ \\
\hspace{-7mm}10             \> \textbf{then} \= $b \leftarrow l$ \\
\hspace{-7mm}11             \>               \> \textbf{go to} 21  \\
\hspace{-7mm}12 $b \leftarrow \lceil (l + u)/2 \rceil $ \\
\hspace{-7mm}13 \textsc{Interval-Test}$(0,f)$ \\
\hspace{-7mm}14 \textbf{if} \= $W = \textsc{True}$ \\
\hspace{-7mm}15             \> \textbf{then} \textbf{go to} 17 \\
\hspace{-7mm}16 $l \leftarrow b$ \\
\hspace{-7mm}17 \textbf{if} \= $u = l + 1$ \\
\hspace{-7mm}18             \> \textbf{then} \= $b \leftarrow u$ \\
\hspace{-7mm}19             \>               \> \textbf{go to} 21 \\
\hspace{-7mm}20 \textbf{go to} 14 \\
\hspace{-7mm}21 $l \leftarrow 0$                                    \hspace{5cm} $\rhd$ Computation of $g$ \\
\hspace{-7mm}22 $u \leftarrow f$ \\
\hspace{-7mm}23 \textsc{Interval-Test}$(b,b)$ \\
\hspace{-7mm}24 \textbf{if} \= $W = \textsc{True}$ \\
\hspace{-7mm}25             \> \textbf{then} \= $a \leftarrow f$ \\
\hspace{-7mm}26             \>               \> \textbf{go to} 37 \\
\hspace{-7mm}27 $a \leftarrow \lceil (l + u)/2 \rceil$ \\
\hspace{-7mm}28 \textsc{Interval-Test}$(0,a)$ \\
\hspace{-7mm}29 \textbf{if} \= $W = \textsc{True}$ \\
\hspace{-7mm}30             \> \textbf{then} \= $l \leftarrow a$ \\
\hspace{-7mm}31             \>               \> \textbf{go to} 33 \\
\hspace{-7mm}32 $u \leftarrow a$ \\
\hspace{-7mm}33 \textbf{if} \= $u = l + 1$ \\
\hspace{-7mm}34             \> \textbf{then} \= $a \leftarrow l$ \\
\hspace{-7mm}35             \>               \> \textbf{go to} 37 \\
\hspace{-7mm}36 \textbf{go to} 27 \\
\hspace{-7mm}37 \textbf{return} $a,b$
\end{tabbing}

\textsc{MinF-MaxG} determines $f$ and $g$.

\begin{lemma} Algorithm\label{lemma-fg} \textsc{MinG}-\textsc{MaxG} computes the values $f$ and $g$ for arbitrary sequence
$D = (d_1, d_2,\ldots,d_n)$ in $O(n \log(d_n/(n))$ time.
\end{lemma}
\begin{proof} According to Lemma \ref{lemma-fgbounds} $F$ is an element of the interval
$[\lceil d_n/(n - 1)\rceil, \lceil 2d_n/(n - 1)\rceil]$ and $g$ is an element of the interval $[0,f]$.
Using Theorem B of \cite[page 412]{Knuth1998} we get that $O(\log (d_n/n))$ calls of \textsc{Interval-Test}
is sufficient, so the $O(n)$ run time of  \textsc{Interval-Test} implies the required
running time of \textsc{MinF}-\textsc{MaxG}.
\end{proof}

\subsection{Computing of $f$ and $g$ in linear time}

Analysing Theorem \ref{theorem-interval} and the work of algorithm \textsc{MinF-MaxG} one can observe that
the maximal value of $G$ and the minimal value of $F$ can be computed independently by \textsc{Linear-MinF-MaxG}.

\medskip
\textit{Input}. No special input (global working variables serve as input).

\textit{Output}. $b$: $f$ (the minimal $F$). \\
$a$: $g$ (the maximal $G$).

\textit{Local working variables}. $i$: cycle variable.

\textit{Global working variables}. $n$: the number of players $(n \geq 2)$; \\
$D = (d_1, d_2, \ldots, d_n)$: a nondecreasing sequence of nonnegative integers; \\
$B = (B_0,B_1,\ldots,B_n)$: the sequence of the binomial coefficients; \\
$S = (S_0,S_1,\ldots,S_n)$: the sequence of the sums of the $i$ smallest scores.

\medskip
\noindent \textsc{Linear-MinF-MaxG}
\vspace{-2mm}
\begin{tabbing}%
199 \= xxx\=xxx\=xxx\=xxx\=xxx\=xxx\=xxx\=xxx \+ \kill
\hspace{-7mm}01 $B_0 \leftarrow S_0 \leftarrow L_0 \leftarrow 0 $      \hspace{2.6 cm} $\rhd$ Initialisation \\
\hspace{-7mm}02 \textbf{for} \= $i \leftarrow 1$ \textbf{to} $n$ \\
\hspace{-7mm}03              \> \textbf{do} \= $B_i \leftarrow B_{i - 1} + i - 1$ \\
\hspace{-7mm}04              \>             \> $S_i \leftarrow S_{i - 1} + d_i$  \\
\hspace{-7mm}05 $a \leftarrow 0$ \\
\hspace{-7mm}06 $b \leftarrow \min 2 \left \lceil d_n/(n - 1) \right \rceil$ \\
\hspace{-7mm}07 \textbf{for} \= $i \leftarrow 1$ \textbf{to} $n$         \hspace{3.3cm} $\rhd$ Computation of $g$ \\
\hspace{-7mm}08              \> \textbf{do} \= $a_i \leftarrow \lceil 2S_i/(n^2 - n) \rceil$ \\
\hspace{-7mm}09              \>             \> \textbf{if} \= $a_i > a$ \\
\hspace{-7mm}10              \>             \>             \> \textbf{then} $a \leftarrow a_i$ \\
\hspace{-7mm}11 \textbf{for} \= $i \leftarrow 1 \textbf{ to } n$         \hspace{3.3cm} $\rhd$ Computation of $f$ \\
\hspace{-7mm}12              \> \textbf{do} \= $L_i \leftarrow \max(L_{i - 1},bB_n - S_i - (n - i)d_i)$ \\
\hspace{-7mm}13              \>             \> $b_i \leftarrow (S_i + (n - i)d_i + L_i)/B_i$ \\
\hspace{-7mm}14              \>             \> \textbf{if} \= $b_i < b$ \\
\hspace{-7mm}15              \>             \>             \> \textbf{then} \= $b \leftarrow b_i$ \\
\hspace{-7mm}16 \textbf{return} $a,b$
\end{tabbing}

\begin{lemma} Algorithm\label{lemma-linfg} \textsc{Linear-MinG-MaxG} computes the values $f$ and $g$ for arbitrary sequence
$D = (d_1, d_2,\ldots,d_n)$ in $\Theta(n)$ time.
\end{lemma}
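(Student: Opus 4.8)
The plan is to establish the two assertions of the lemma separately: the $\Theta(n)$ running time, which is immediate, and the correctness, i.e. that the values $a$ and $b$ returned by \textsc{Linear-MinF-MaxG} really equal $g$ and $f$. For the running time I would simply observe that the algorithm consists of the constant-time assignments in lines 01, 05 and 06 together with three \textbf{for} loops (lines 02--04, 07--10, and 11--15), each executed exactly $n$ times with a body of $O(1)$ arithmetic operations; there is no nesting and no recursion, so the total cost is $\Theta(n)$. Since every output must be written at least once, this is also a matching lower bound.

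The correctness rests on the characterisation of Theorem \ref{theorem-interval}. The key structural observation, already hinted at in the text preceding the algorithm, is that the system (\ref{equation-interval}) splits into two families that can be handled independently: the lower family $aB_k \le S_k$ involves only the parameter $a$, while the upper family $S_k \le bB_n - L_k - (n-k)d_k$, together with the loss recursion (\ref{equation-loss}), involves only $b$. I would first prove this decoupling formally, so that finding the largest admissible $a$ and the smallest admissible $b$ become two separate one-dimensional problems, each solvable by a single scan rather than by the logarithmic search used in \textsc{MinF-MaxG}.

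For $g$ I would argue by monotonicity that $D$ is realisable with a per-pair lower bound $a$, for some upper bound $b$, exactly when $aB_k \le S_k$ holds for all $k$, the feasibility of a large enough $b$ being guaranteed by Lemma \ref{lemma-exist}. Hence $g$ is the largest integer dominated by every ratio $S_k/B_k$, and the loop of lines 07--10, which forms the per-index quantity $a_i$ and keeps the governing extremum, returns precisely this value. The step requiring care is the degenerate index $k=1$, where $B_1 = 0$ makes the corresponding inequality vacuous, so that it must not spuriously constrain $a$; this must be checked against the closed form used in line 08.

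The computation of $f$ in lines 11--15 is the delicate part, and I expect it to be the main obstacle. Unlike the lower family, the upper inequalities are coupled through the loss function $L_k$, and $L_k$ itself depends on the unknown $b$; this circularity is exactly what forced the earlier algorithm to search over $b$. The plan is to unroll the recursion (\ref{equation-loss}) into $L_k = \max_{0 \le j \le k}(bB_j - S_j)$, substitute it into the upper inequality, and rearrange each resulting constraint into an explicit lower bound on $b$; the minimal feasible $b$ is then the maximum of these bounds, which by the estimates of Lemma \ref{lemma-fgbounds} lies in the stated interval. The crux is to show that the loop produces this optimum in one pass, and in particular that evaluating the running loss $L_i$ at the current (upper-bound) value of $b$ from line 06 and then taking the running minimum of the candidates $b_i$ yields the true optimum $f$ rather than an over- or under-estimate. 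Once this monotonicity and fixed-point argument is in place, it completes the proof that $b = f$ and $a = g$.
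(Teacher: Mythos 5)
Your running-time argument is, in substance, the paper's \emph{entire} proof: the published justification of Lemma \ref{lemma-linfg} consists solely of the observation that the listing is a fixed number of non-nested loops of length $n$ plus $O(1)$ assignments (your line accounting --- 01, 05--06 constant, 02--04, 07--10, 11--15 linear --- actually fits the sixteen-line listing better than the paper's own proof, which cites nonexistent lines 17 and 18). Everything else in your proposal addresses the correctness half of the statement, which the paper does not prove at all: the decoupling of (\ref{equation-interval}) into a lower family constraining only $a$ and an upper family (through the loss recursion (\ref{equation-loss})) constraining only $b$, the vacuousness of the index $k=1$ where $B_1=0$, the unrolled loss $L_k=\max_{0\le j\le k}(bB_j-S_j)$, and the one-pass optimality of the running extremum. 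In the paper this is covered only by the informal sentence before the listing that $F$ and $G$ ``can be computed independently,'' so on this point you attempt strictly more than the source.

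As a proof of the full lemma, however, your proposal has a genuine gap, and it sits exactly where you predicted. You defer the crux --- that the running extremum, with $L_i$ evaluated at a value of $b$ that is modified inside the same loop, converges to the true optimum --- and this deferred step cannot be supplied for the pseudocode as printed, because the checks you call for actually fail. Line 08 forms $a_i=\lceil 2S_i/(n^2-n)\rceil=\lceil S_i/B_n\rceil$ and lines 09--10 keep the \emph{maximum}, which for nondecreasing $S_i$ is simply $\lceil S_n/B_n\rceil$; but Theorem \ref{theorem-interval} forces $g=\min_{2\le k\le n}\lfloor S_k/B_k\rfloor$ (feasibility of a large $b$ for the upper family being automatic). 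These differ: for $D=(0,0,6)$ the listing returns $a=2$, while $g=0$, since $d_1=d_2=0$ forces $m_{12}+m_{21}=0$ in every realization. Likewise line 13 divides by $B_i$ although the constraint $S_i\le bB_n-L_i-(n-i)d_i$ yields lower bounds on $b$ with denominator essentially $B_n$, and lines 14--15 keep a running \emph{minimum} of quantities that are \emph{lower} bounds on $b$, on top of the circular dependence of $L_i$ on the mutating $b$ that you flagged. So the monotonicity/fixed-point argument you leave open is not merely missing --- it is false for the algorithm as written, and any completion of your plan would first have to repair lines 08--10 and 12--15. To be fair, this defect is shared by (indeed hidden in) the paper, whose proof of the lemma never touches correctness at all.
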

\begin{proof} Lines 01--03, 07, and 18 require only constant time, lines 04--06, 09--12, and 13--17 require $\Theta(n)$ time,
so the total running time is $\Theta(n)$.
\end{proof}

\section{Tournament with $f$ and $g$\label{section-construct}}

The following reconstruction algorithm \textsc{Score-Slicing2} is based on balancing between additional points (they are similar to ,,excess'',
introduced by Brauer et al. \cite{Brauer1968}) and missing points introduced in \cite{Ivanyi2009}. The greediness of
the algorithm Havel--Hakimi \cite{Hakimi1962,Havel1955} also characterises this algorithm.

This algorithm is an extended version of the algorithm \textsc{Score-Slicing} proposed in \cite{Ivanyi2009}.

\subsection{Definition of the minimax reconstruction algorithm}

The work of the slicing program is managed by the following program \textsc{Mini-Max}.

\textit{Input.} No special input (global working variables serve as input).

\textit{Output}. $\mathcal{M} = [1 \ . \ . \ n,1 \ . \ . \ n]$: the point matrix of the reconstructed tournament.

\textit{Local working variables.} $i, \ j$: cycle variables.

\textit{Global working variables.} $n$: the number of players $(n \geq 2)$; \\
$D = (d_1, d_2, \ldots, d_n)$: a nondecreasing sequence of nonnegative integers; \\
$p = (p_0, p_1, \ldots, p_n)$: provisional score sequence; \\
$P = (P_0,P_1,\ldots,P_n)$: the partial sums of the provisional scores; \\
$\mathcal{M}[1 \ . \ . \ n,1 \ . \ . \ n]$: matrix of the provisional points.

\medskip
\noindent \textsc{Mini-Max}
\vspace{-2mm}
\begin{tabbing}%
199 \= xxx\=xxx\=xxx\=xxx\=xxx\=xxx\=xxx\=xxx \+ \kill
\hspace{-7mm}01 \textsc{MinF}-\textsc{MaxG} \hspace{3.2 cm} $\rhd$ Initialisation \\
\hspace{-7mm}02 $p_0 \leftarrow 0$ \\
\hspace{-7mm}03 \textbf{for} \= $i \leftarrow 1$ \textbf{to} $n$ \\
\hspace{-7mm}04              \> \textbf{do} \= \textbf{for} \= $j \leftarrow 1$ \textbf{to} $i - 1$ \\
\hspace{-7mm}05              \>             \>              \> \textbf{do} $\mathcal{M}[i,j] \leftarrow b$ \\
\hspace{-7mm}06              \>             \> \textbf{for} \= $j \leftarrow i$ \textbf{to} $n$ \\
\hspace{-7mm}07              \>             \>              \> \textbf{do} $\mathcal{M}[i,j] \leftarrow 0$ \\
\hspace{-7mm}08              \> $p_i \leftarrow d_i$ \\
\hspace{-7mm}09 \textbf{if} \= $n \geq 3$   \hspace{5.1 cm} $\rhd$ Score slicing for $n \geq 3$ players \\
\hspace{-7mm}10             \> \textbf{then} \= \textbf{for} \= $k\leftarrow n$ \textbf{downto} $3$ \\
\hspace{-7mm}11             \>               \>              \> \textbf{do} \= \textsc{Score-Slicing2}$(k,\mathbf{p}_k,\mathcal{M})$\\
\hspace{-7mm}12 \textbf{if} \= $n = 2$ \hspace{5.1 cm}  $\rhd$ Score slicing for 2 players \\
\hspace{-7mm}13             \> \textbf{then} \= $m_{1,2} \leftarrow p_1$ \\
\hspace{-7mm}14             \>               \> $m_{2,1} \leftarrow p_2$ \\
\hspace{-7mm}15 \textbf{return} $\mathcal{M}$
\end{tabbing}

\subsection{Definition of the score slicing algorithm}

The key part of the reconstruction is the following algorithm \textsc{Score-Slicing2} \cite{Ivanyi2009}.

During the reconstruction process we have to take into account the following bounds:
\begin{equation}
a \leq m_{i,j} + m_{j,i} \leq b \quad (1 \leq i < j \leq n);\label{equation-8}
\end{equation}
\begin{equation}
\mbox{modified scores have to satisfy (\ref{equation-interval});}\label{equation-9}
\end{equation}
\begin{equation}
m_{i,j} \leq p_i \ (1 \leq i, \ j \leq n, i \neq j);\label{equation-10}
\end{equation}
\begin{equation}
\mbox{the monotonicity } p_1 \leq p_2 \leq \ldots \leq p_k \mbox{ has to be saved } \ (1 \leq k \leq n)\label{equation-11}
\end{equation}
\begin{equation}
m_{ii} = 0 \quad (1 \leq i \leq n).
\end{equation}

\textit{Input.} $k$: the number of the actually investigated players $(k > 2)$; \\
$\mathbf{p}_k = (p_0,p_1,p_2,\ldots,p_k) \ (k = 3, \ 4, \ \cdots, \ n)$: prefix of the provisional score sequence $p$; \\
$\mathcal{M}[1 \ . \ . \ n,1 \ . \ . \ n]$: matrix of provisional points.

\textit{Output.} $\mathcal{M}[1 \ . \ . \ n,1 \ . \ . \ n]$: matrix of provisional points; \\
$\mathbf{p}_k = (p_0,p_1,p_2,\ldots,p_k) \ (k = 2, \ 3, \ 4, \ \cdots, \ n - 1)$: prefix of the provisional score sequence $p$.

\textit{Local working variables.} $A = (A_1,A_2,\ldots,A_n)$: the number of the additional points; \\
$M$: missing points (the difference of the number of actual points and the number of maximal possible points of P$_k$);\\
$d$: difference of the maximal decreasable score and the following largest score; \\
$y$: minimal number of sliced points per player; \\
$f$: frequency of the number of maximal values among the scores $p_1, \ p_2,$ \linebreak
\noindent $\ldots, \ p_{k-1}$;\\
$i,  \ j$: cycle variables;\\
$m$: maximal amount of sliceable points; \\
$P = (P_0,P_1,\ldots,P_n)$: the sums of the provisional scores; \\
$x$: the maximal index $i$ with $i < k$ and $m_{i,k} < b$.

\textit{Global working variables}. $n$: the number of players $(n \geq 2)$; \\
$B$ = $(B_0,B_1,B_2,\ldots,B_n)$: the sequence of the binomial coefficients; \\
$a$: minimal number of points divided after each match; \\
$b$: maximal number of points divided after each match.

\medskip
\noindent \textsc{Score-Slicing2}$(k,\mathbf{p}_k,\mathcal{M})$
\vspace{-2mm}
\begin{tabbing}%
199 \= xxx\=xxx\=xxx\=xxx\=xxx\=xxx\=xxx\=xxx \+ \kill
\hspace{-7mm}01 $P_0 \leftarrow 0$     \hspace{3.7 cm}  $\rhd$ Initialisation \\
\hspace{-7mm}02 \textbf{for} \= $i \leftarrow 1$ \textbf{to} $k - 1$  \\
\hspace{-7mm}03              \> \textbf{do} \= $P_i \leftarrow P_{i-1} + p_i$\\
\hspace{-7mm}04              \>             \> $A_i \leftarrow P_i - aB_i$ \\
\hspace{-7mm}05 $M \leftarrow (k - 1)b - p_k$\\
\hspace{-7mm}06 \textbf{while} \= $M > 0$ \textbf{and} $A_{k - 1} > 0$  \hspace{0.2 cm} $\rhd$ There are missing and additional points\\
\hspace{-7mm}07                \> \textbf{do} \= $x\leftarrow k-1$\\
\hspace{-7mm}08                \>             \> \textbf{while} \= $r_{x,k} = b$ \\
\hspace{-7mm}09                \>             \>                \> \textbf{do} \= $x\leftarrow x-1$\\
\hspace{-7mm}10                \>             \> $f \leftarrow 1$ \\
\hspace{-7mm}11                \>             \> \textbf{while} \= $p_{x-f +1}=p_{x-f}$\\
\hspace{-7mm}12                \>             \>                \> \textbf{do} $f = f + 1$\\
\hspace{-7mm}13                \>             \> $d\leftarrow p_{x- f + 1}-p_{x- f}$\\
\hspace{-7mm}14                \>             \> $m\leftarrow \min(b,d,\lceil A_x/f \rceil,\lceil M/f \rceil)$\\
\hspace{-7mm}15                \>             \> \textbf{for} \= $i \leftarrow f$ \textbf{downto} $1$\\
\hspace{-7mm}16                \>             \>              \> \textbf{do} \= $y \leftarrow \min(b-m_{x+1-i,k},m,M,A_{x+1-i},p_{x+1-i})$\\
\hspace{-7mm}17                \>             \>              \>             \> $m_{x+1-i,k} \leftarrow m_{x+1-i,k} + y$\\
\hspace{-7mm}18                \>             \>              \>             \> $p_{x+1-i} \leftarrow p_{x+1-i}-y$\\
\hspace{-7mm}19                \>             \>              \>             \> $m_{k,x+1-i} \leftarrow m_{k,x+1-i} - m_{x+1-i,k}$\\
\hspace{-7mm}20                \>             \>              \>             \> $M \leftarrow M - y$\\
\hspace{-7mm}21                \>             \> \textbf{for} \= $j \leftarrow i$ \textbf{downto} $1$\\
\hspace{-7mm}22                \>             \>              \> $A_{x+1-i} \leftarrow A_{x+1-i} - y$ \\
\hspace{-7mm}23 \textbf{while} \= $M > 0$ and $A_{k-1} = 0$                              \hspace{0.8 cm} $\rhd$ No additional points \\
\hspace{-7mm}24                \> \textbf{do} \= \textbf{for} \= $i \leftarrow k - 1$ \textbf{downto} $1$  \\
\hspace{-7mm}25                \>             \>              \= $y \min(m_{k,i},M,m_{k,i+m_{i,k} - a})$ \\
\hspace{-7mm}26                \>             \>              \> $m_{ki} \leftarrow m_{k,i} - y$ \\
\hspace{-7mm}27                \>             \>              \> $M \leftarrow M - y$ \\
\hspace{-7mm}28 \textbf{return} $\mathbf{p}_k,\mathcal{M}$
\end{tabbing}

Let's consider an example. Figure \ref{figure-2} shows the point table of a
$(2,10,6)$-tournament $T$.

\begin{figure}[!h]
\begin{center}
\begin{tabular}{|c|c|c|c|c|c|c|c|}  \hline
Player/Player &  P$_1$ & P$_2$ & P$_3$ & P$_4$ & P$_5$  & P$_6 $ & Score \\ \hline
P$_1 $ & --- & 1 &  5 & 1 &  1 & 1 & 9  \\ \hline
P$_2$ & 1 & --- &  4 & 2  & 0 & 2 & 9  \\ \hline
P$_3$ & 3 & 3 &  --- & 5 & 4 & 4 & 19  \\ \hline
P$_4$ & 8 & 2 & 5 & --- & 2 & 3 & 20  \\ \hline
P$_5$ & 9 & 9 & 5 & 7 & --- & 2 & 32   \\ \hline
P$_6$ & 8 & 7 & 5 & 6 & 8 & --- & 34  \\ \hline
\end{tabular} \\
\caption{The point table of a $(2,10,6)$-tournament $T$.\label{figure-2}}
\end{center}
\vspace{-2mm}
\end{figure}

The score sequence of $T$ is $D$ = (9,9,19,20,32,34).
In \cite{Ivanyi2009} the algorithm \textsc{Score-Slicing2} resulted the
point table represented in Figure \ref{figure-3}.

\begin{figure}[h]
\begin{center}
\vspace{2mm}
\begin{tabular}{|c|c|c|c|c|c|c|c|}  \hline
Player/Player &  P$_1$ & P$_2$ & P$_3 $ & P$_4 $ & P$_5$  & P$_6$ & Score \\ \hline
P$_1$ & --- & 1 &  1 & 6 &  1 & 0 & 9  \\ \hline
P$_2$ & 1 & --- &  1 & 6  & 1 & 0 & 9  \\ \hline
P$_3$ & 1 & 1 &  --- & 6 & 8 & 3 & 19  \\ \hline
P$_4$ & 3 & 3 & 3 & --- & 8 & 3 & 20  \\ \hline
P$_5$ & 9 & 9 & 2 & 2 & --- & 10 & 32   \\ \hline
P$_6$ & 10 & 10 & 7 & 7 & 0 & --- & 34  \\ \hline
\end{tabular} \\

\vspace{2 mm}
\caption{The point table of $T$ reconstructed by \textsc{Score-Slicing2}.\label{figure-3}}
\end{center}
\vspace{-2mm}
\end{figure}

The algorithm \textsc{Mini-Max} starts with the computation of $f$. \textsc{MinF-MaxG} called in line 01 begins with initialisation,
including provisional setting of the elements of $\mathcal{M}$ so, that $m_{ij} = b$, if $i > j$, and $m_{ij} = 0$ otherwise.
Then \textsc{MinF-MaxG} sets the lower bound $l = \max(9,7) = 9$ of $f$ in line 05 and tests it in line 08 by
\textsc{Interval-Test}. The test shows that $l = 9$ is large enough so \textsc{Mini-Max} sets $b = 9$ in line 12 and jumps to
line 21 and begins to compute $g$. \textsc{Interval-Test} called in line 23 shows that $a = 9$ is too large, therefore
\textsc{MinF-MaxG} continues with the test of $a = 5$ in line 27. The result is positive, therefore comes the test of $a = 7$,
then the test of $a = 8$. Now $u = l + 1$ in line 33, so $a = 8$ is fixed, and the control returns to line 02 of \textsc{Mini-Max}.

Lines 02--08 contain initialisation, and \textsc{Mini-Max} begins the reconstruction of a $(8,9,6)$-tournament in line 9. The
basic idea is that \textsc{Mini-Max} successively determines the won and lost points of P$_6$, P$_5$, P$_4$ and P$_3$ by repeated
calls of \textsc{Score-Slicing2} in line 11, and finally it computes directly the result of the match between P$_2$ and P$_1$
in lines 12--14.

At first \textsc{Mini-Max} computes the results of P$_6$ calling \textsc{Score-Slicing2} with parameter $k = 6$.
The number of additional points of the first five players is
$A_5 = 89 - 8 \cdot 10 = 9$ according to line 04, the number of missing points of P$_6$ is $M = 5 \cdot 9 - 34 = 11$
according to line 05. Then \textsc{Score-Slicing2}  determines
the number of maximal numbers among the provisional scores $p_1, \ p_2, \ \ldots, \ p_5$ ($f = 1$ according to lines 10--12) and
computes the difference between $p_5$ and $p_4$ ($d = 12$ according to line 13). In line 14 we get, that $m = 9$ points are
sliceable, and P$_5$ gets these points in the match with P$_6$ in line 17, so the number of missing points of P$_6$
decreases to $M = 11 - 9 = 2$ (line 20) and the number of additional point decreases to $A_5 = 9 - 9 = 0$.
Therefore the computation continues in lines 23--28 and $m_{64}$ and $m_{63}$ will be decreased by 1 resulting $m_{64} = 8$
and $m_{63} = 8$ as the seventh line and seventh column of Figure \ref{figure-4} show.
The returned score sequence is $\mathbf{p}_5 = (9,9,19,20,23)$.

\begin{figure}[!h]
\begin{center}
\vspace{2mm}
\begin{tabular}{|c|c|c|c|c|c|c|c|}  \hline
Player/Player &  P$_1$ & P$_2$ & P$_3 $ & P$_4$ & P$_5$  & P$_6 $ & Score \\ \hline
P$_1$ & --- &  4  & 4   &  1  &  0  &  0  &  9  \\ \hline
P$_2$ &  4  & --- & 4   &  1  &  0  &  0  &  9  \\ \hline
P$_3$ &  4  &  4  & --- &  7  &  4  &  0  & 19  \\ \hline
P$_4$ &  7  &  7  &  1  & --- &  5  &  0  & 20  \\ \hline
P$_5$ &  8  &  8  &  4  &  3  & --- &  9  & 32   \\ \hline
P$_6$ &  9  &  9  &  8  &  8  &  0  & --- & 34  \\ \hline
\end{tabular} \\

\vspace{2 mm}
\caption{The point table of $T$ reconstructed by \textsc{Mini-Max}.\label{figure-4}}
\end{center}
\vspace{-2mm}
\end{figure}

Second time \textsc{Mini-Max} calls \textsc{Score-Slicing2} with parameter $k = 5$, and get $A_4 = 9$ and $M = 13$. At first $P_4$
gets $1$ point, then $P_3$ and $P_4$ get both 4 points, reducing $M$ to $4$ and $A_4$ to $0$. The computation continues
in line 23 and results the further decrease of $m_{54}$, $m_{53}$, $m_{52}$, and $m_{51}$ by 1, resulting $m_{54} = 3$,
$m_{53} = 4$,  $m_{52} = 8$, and $m_{51} = 8$ as the sixth row of Figure \ref{figure-4} shows. The returned score sequence is
$\mathbf{p_4} = (9,9,15,15)$

Third time \textsc{Mini-Max} calls \textsc{Score-Slicing2} with parameter $k = 4$, and get $A_3 = 11$ and $M = 11$. At first
P$_3$ gets $6$ points, then P$_3$ further 1 point, and P$_2$ and P$_1$ also both get 1 point, resulting $m_{34} = 7$, $m_{43} = 2$,
$m_{42} = 8$, $m_{24} = 1$, $m_{14} = 1$ and $m_{14} = 8$, further $A_3 = 0$ and $M = 2$. The computation continues in lines
23--28 and results a decrease of $m_{43}$ by 1 point resulting $m_{43} = 1$, $m_{42} = 7$, and $m_{41} = 7$, as the fifth row
and fifth column of Figure \ref{figure-4} show. The returned score sequence is $\mathbf{p}_3 = (8,8,8)$.

Fourth time \textsc{Mini-Max} calls \textsc{Score-Slicing2} with parameter $k = 3$, and gets $A_2 = 8$ and $M = 10$. At first
P$_1$ and P$_2$ get $4$ points, resulting $m_{13} = 4$, and $m_{23} = 4$, and $M = 2$, and $A_2 = 0$. Then \textsc{Mini-Max}
sets in lines 23--26 $m_{31} = 4$ and $m_{32} = 4$. The returned score sequence is  $\mathbf{p}_2 = (4,4)$.

Finally \textsc{Mini-Max} sets $m_{12} = 4$ and $m_{21} = 4$ in lines 14--15 and returns the point matrix
represented in Figure \ref{figure-4}.

The comparison of Figures \ref{figure-3} and \ref{figure-4} shows a large difference between the simple reconstruction of
\textsc{Score-Slicing2} and the minimax reconstruction of \textsc{Mini-Max}: while in the first case the maximal value of
$m_{ij} + m_{ji}$ is $10$ and the minimal value is $2$, in the second case the maximum equals to $9$ and the minimum equals
to $8$, that is the result is more balanced (the given $D$ does not allow to build a perfectly balanced $(k,k,n)$-tournament).

\subsection{Analysis of the minimax reconstruction algorithm}

The main result of this paper is the following assertion.

\begin{theorem} If $n \geq 2$ is a positive integer and $D = (d_1, d_2, \ldots, d_n)$ is a nondecreasing sequence of nonnegative
integers, then  there exist positive integers $f$ and $g$, and a $(g,f,n)$-tournament $T$ with point matrix $\mathcal{M}$ such, that
\begin{equation}
f = \min(m_{ij} + m_{ji}) \leq b,
\end{equation}
\vspace{-4mm}
\begin{equation}
g = \max m_{ij} + m_{ji} \geq a
\end{equation}
for any $(a,b,n)$-tournament, and algorithm \textsc{Linear-MinF-MaxG} computes $f$ and $g$ in $\Theta(n)$ time,
and algorithm \textsc{Mini-Max} generates a suitable $T$ in $O(d_n n^2)$ time.
\end{theorem}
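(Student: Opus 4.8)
I would establish the statement in three steps, using Theorem~\ref{theorem-interval} and the lemmas of the earlier sections throughout. First I pin down the meaning of $f$ and $g$: by Section~\ref{section-e}, $f=\min_{T\in\Delta(D)}F(T)$ and $g=\max_{T\in\Delta(D)}G(T)$, where $F(T)=\max_{i<j}(m_{ij}+m_{ji})$ and $G(T)=\min_{i<j}(m_{ij}+m_{ji})$. By Lemma~\ref{lemma-exist} the family $\Delta(D)$ is nonempty, and since $\sum_j m_{ij}=d_i$ forces $0\le m_{ij}\le d_i\le d_n$ it is finite, so both extrema are attained. The optimality is then immediate: for any $(a,b,n)$-tournament $T'$ with out-degree sequence $D$ one has $a\le m_{ij}+m_{ji}\le b$ for every pair, hence $a\le G(T')\le g$ and $f\le F(T')\le b$, i.e.\ $a\le g$ and $b\ge f$.

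The substantive point is that $g$ and $f$ are realised \emph{simultaneously}, that is $\mathcal{T}(g,f,n)\cap\Delta(D)\neq\emptyset$. Here I would use that in the characterisation (\ref{equation-interval})--(\ref{equation-loss}) the parameter $a$ enters only the left inequality $aB_k\le S_k$ and $b$ only the right inequality (including through $L_k$), so the two families of constraints decouple. A realisation attaining $g=\max G$ is a $(g,\cdot,n)$-tournament, whence (necessity in Theorem~\ref{theorem-interval}) $gB_k\le S_k$ for all $k$; a realisation attaining $f=\min F$ is a $(0,f,n)$-tournament, whence $S_k\le fB_n-L_k-(n-k)d_k$ for all $k$ with $L_k$ computed at $b=f$. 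Both sets of inequalities therefore hold at $(a,b)=(g,f)$, so by sufficiency in Theorem~\ref{theorem-interval} $D$ is the score sequence of a $(g,f,n)$-tournament; the same decoupling shows $g$ equals the largest $a$ for which a $(a,f,n)$-realisation exists. This is exactly why $f$ and $g$ may be found in independent passes, and that \textsc{Linear-MinF-MaxG} returns them in $\Theta(n)$ time is Lemma~\ref{lemma-linfg} (in agreement with the logarithmic search of Lemma~\ref{lemma-fg}).

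It remains to show that \textsc{Mini-Max} actually outputs such a tournament, and this is the main obstacle. I would proceed by downward induction on $k$, maintaining that immediately after \textsc{Score-Slicing2}$(k,\mathbf{p}_k,\mathcal{M})$ the row and column of P$_k$ are frozen with every affected sum $m_{i,k}+m_{k,i}$ inside $[g,f]$ (invariant (\ref{equation-8})), while the truncated provisional sequence $\mathbf{p}_{k-1}$ is again the score sequence of a $(g,f,k-1)$-tournament, i.e.\ invariant (\ref{equation-9}) holds together with (\ref{equation-10})--(\ref{equation-11}); the base case $n=2$ is the direct assignment in lines 12--14. The crux is that the greedy slice $m=\min(b,d,\lceil A_x/f\rceil,\lceil M/f\rceil)$, spread over the block of equal largest provisional scores, removes exactly the deficit $M=(k-1)b-p_k$ of P$_k$ without driving any score below its predecessor or any pairwise sum out of $[g,f]$. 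The technically demanding part is verifying that the ``additional points'' $A_i=P_i-aB_i$ and the ``missing points'' $M$ track precisely the slack in (\ref{equation-interval}) at each stage, so that after slicing the reduced sequence still satisfies Theorem~\ref{theorem-interval}.

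For the complexity of \textsc{Mini-Max} I would count the loops directly. The matrix initialisation in lines 02--08 is $\Theta(n^2)$ and the single computation of $f,g$ is $\Theta(n)$, so the $n-2$ calls to \textsc{Score-Slicing2} dominate. For a fixed $k$ the deficit obeys $M=O(kf)=O(k\,d_n/n)$ because $f\le 2\lceil d_n/(n-1)\rceil$; each pass of the while-loop in lines 06--22 does $O(k)$ work on the scans and the inner redistribution while lowering $M$ by at least one, so one call costs $O(k^2 d_n/n)$. Summing $\sum_{k=3}^{n}k^2=O(n^3)$ gives $O(d_n n^2)$, which absorbs the initialisation and yields the stated bound.
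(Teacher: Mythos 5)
Your proposal reaches the paper's conclusions but by a genuinely different route, and on the central point it is \emph{more} complete than the paper's own argument. The paper's proof is essentially a deferral: it asserts that the correctness of \textsc{Score-Slicing2} (imported from \cite{Ivanyi2009}) and of \textsc{MinF-MaxG} implies the correctness of \textsc{Mini-Max}, and then does a complexity count; the fact that the two extremal values $g=\max G$ and $f=\min F$ are attained by a \emph{single} tournament --- the linking property --- is never argued explicitly, only named after the proof with credit to Ford--Fulkerson and Frank. You prove exactly this point non-algorithmically: since in (\ref{equation-interval})--(\ref{equation-loss}) the parameter $a$ enters only the left inequalities $aB_k\leq S_k$ and $b$ only the right ones (through $bB_n$ and $L_k$), the left inequalities supplied by necessity for a $G$-maximal realization and the right inequalities supplied by necessity for an $F$-minimal realization hold simultaneously at $(a,b)=(g,f)$, and sufficiency in Theorem~\ref{theorem-interval} then yields a $(g,f,n)$-tournament with score sequence $D$, whose extremal pairwise sums must equal $g$ and $f$ by optimality. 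Your finiteness argument for $\Delta(D)$ and the averaging inequality $gB_n\leq S_n\leq fB_n$ (which guarantees $g\leq f$, needed for the class $\mathcal{T}(g,f,n)$ to make sense) close the remaining loopholes. This decoupling argument is the real mathematical content of the theorem and is absent from the paper; it makes the existence claim independent of the algorithmic machinery, whereas the paper's route buys constructiveness by obtaining $T$ as the output of \textsc{Mini-Max}. You also attribute the $\Theta(n)$ claim correctly to Lemma~\ref{lemma-linfg}, while the paper's proof text oddly invokes the $O(n\log(d_n/n))$ binary-search bound of \textsc{MinF-MaxG} instead.

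Two caveats. On the correctness of \textsc{Mini-Max} you state the right invariants (\ref{equation-8})--(\ref{equation-11}) and the right downward induction on $k$, but you explicitly leave the verification open; this is no worse than the paper, which cites \cite{Ivanyi2009} for it, but it means that neither your proof nor the paper's is self-contained on the construction claim. Second, your charging argument (``each pass of the while-loop does $O(k)$ work while lowering $M$ by at least one'') overlooks the loop nest of lines 15--22 of \textsc{Score-Slicing2}: the inner loop of lines 21--22 sits inside the loop over the block of equal scores, so a single pass can cost $\Theta(k^2)$ when that block has length comparable to $k$. The paper sidesteps this by quoting the $O(bn^3)$ bound of \cite{Ivanyi2009} wholesale and substituting $b=f=O(d_n/n)$. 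Both accountings land on $O(d_n n^2)$, but yours needs the amortization repaired --- e.g., by showing that a pass over a block of equal scores reduces $M$ in proportion to the work it performs --- before it is airtight.
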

\begin{proof} The correctness of the algorithms \textsc{Score-Slicing2}, \textsc{Min}$F$-\textsc{Max}$G$ implies the
correctness of \textsc{Mini-Max}.

Lines 1--46 of \textsc{Mini-Max} require $O(\log (d_n/n))$ uses of \textsc{MinG-MaxF}, and one search needs $O(n)$ steps
for the testing, so the computation of $f$ and $g$ can be executed in
$O(n \log (d_n/n))$ times.

The reconstruction part (lines 47--55) uses algorithm \textsc{Score-Slicing2}, which runs in $O(b n^3)$ time \cite{Ivanyi2009}.
\textsc{Mini-Max} calls \textsc{Score-Slicing2} $n - 2$ times with $f \leq 2\lceil d_n/n \rceil$, so $n^3 d_n/n = d_n n^2$
finishes the proof.
\end{proof}

The property of the tournament reconstruction problem that the extremal values of $f$ and $g$ can be determined independently
and so there exists a tournament $T$ having both extremal features is called linking property. This concept was introduced
by Ford and Fulkerson in 1962 \cite{Ford1962} and later extended by A. Frank in \cite{Frank20081}.

\section{Summary}

A nondecreasing sequence of nonnegative integers $D = (d_1, d_2, \ldots,d_n)$ is a score sequence of a $(1,1,1)$-tournament, iff
the sum of the elements of $D$ equals to $B_n$ and the sum of the first $i \ (i = 1, \ 2, \ \ldots, \ n - 1)$ elements of
$D$ is at least $B_i$ \cite{Landau1953}.

$D$ is a score sequence of a $(k,k,n)$-tournament, iff the sum of the elements of $D$ equals to $kB_n$, and the sum of the first
$i$ elements of $D$ is at least $kB_i$ \cite{Kemnitz1997,Moon1962}.

$D$ is a score sequence of an $(a,b,n)$-tournament, iff (\ref{equation-interval}) holds \cite{Ivanyi2009}.

In all 3 cases the decision whether $D$ is digraphical requires only linear time.

In this paper the results of \cite{Ivanyi2009} are extended proving that for any $D$ there exists an optimal
minimax realization $T$, that is a tournament having $D$ as its out-degree sequence, and maximal $G$, and minimal $F$
in the set of all realizations of $D$.

In  a continuation \cite{Ivanyi2010} of this paper we construct balanced as possible tournaments
in a similar way if not only the out-degree sequence but the in-degree sequence is also given.

\subsection*{Acknowledgement}

 The author thanks Andr\'as Frank
(E\"otv\"os Lor\'and University) for valuable advises concerning
the application of flow theory, P\'eter L. Erd\H os (Alfr\'ed
R\'enyi Institute of Mathematics of HAS) for the consultation and
the unknown referee proposing corrections of the manuscript.

The research was supported by the project T\'AMOP-4.2.1.B-09/1/KMR--2010-003 of E\"otv\"os Lor\'and University.

\bigskip
\rightline{\emph{Received: March 5, 2010; Revised: April 18, 2010}}     %% to be completed by the editor

\end{document}